\newtheorem{theorem}{Theorem}
\newtheorem{corollary}{Corollary}
\theoremstyle{definition}
\theoremstyle{remark}
\begin{document}
\title[Vector valued Hardy spaces and distributional boundary values]{Note on vector valued Hardy spaces related to analytic functions having distributional boundary values}

\author[R. D. Carmichael]{Richard D. Carmichael}

\address{R. D. Carmichael \\ Department of Mathematics and Statistics\\ Wake Forest University\\ Winston-Salem\\ N.C. 27109-7388 \\
U.S.A.}
\email{carmicha@wfu.edu}

\author[S. Pilipovi\'{c}]{Stevan Pilipovi\'{c}}
\address{S. Pilipovi\'{c}\\ Department of Mathematics and Informatics\\
University of Novi Sad\\
Trg Dositeja Obradovi\'{c}a 4\\
21000 Novi Sad\\
Serbia}
\email{Stevan.Pilipovic@dmi.uns.ac.rs}

\author[J. Vindas]{Jasson Vindas}
\thanks {J. Vindas was supported by Ghent University through the BOF-grants 01J11615 and 01J04017.}
\address{J. Vindas, Department of Mathematics: Analysis, Logic and Discrete Mathematics\\ Ghent University\\ Krijgslaan 281\\ 9000 Gent\\ Belgium}
\email{jasson.vindas@UGent.be}

\subjclass[2010]{32A07; 32A35; 46F20; 32A26.}
\keywords{Hardy spaces; vector valued analytic functions on tube domains; vector valued distributional boundary values; Poisson integral transform.}
\begin{abstract}
Analytic functions defined on a tube domain $T^{C}\subset \mathbb{C}^{n}$ and taking values in a Banach space $X$ which are known to have $X$-valued distributional boundary values are shown to be in the Hardy space $H^{p}(T^{C},X)$ if the boundary value is in the vector valued Lebesgue space $L^{p}(\mathbb{R}^{n},X)$, where $1\leq p \leq \infty$ and $C$ is a regular open convex cone. Poisson integral transform representations of elements of $H^{p}(T^{C}, X)$ are also obtained for certain classes of Banach spaces, including reflexive Banach spaces.
\end{abstract}

\maketitle

\section{Introduction}
 In \cite{C-R83} Carmichael and Richters proved that if a holomorphic function on a tube domain having as base a regular open convex cone has an $L^p$ function (with $1\leq p\leq \infty$) as distributional boundary value, then the holomorphic function should belong to the Hardy space $H^{p}$ on the tube. The authors have recently obtained a vector valued generalization of this result in \cite{C-P-V2019}. However, we were only able to prove the desired vector valued version in the range $2\leq p\leq \infty$,  and only for Hilbert space valued spaces; see \cite[Theorem 4.4]{C-P-V2019}.
 
The aim of this note is to improve the quoted main result from \cite{C-P-V2019} by showing that it holds for any $1\leq p\leq \infty $ and any Banach space. This will be done in Section \ref{bvHardy section 3}. Further, in Section \ref{bvHardy section 4}, we prove that any element of an $X$-valued Hardy space is representable as a Poisson integral if $X$ is a dual Banach space satisfying the Radon-Nikod\'{y}m property. In particular, the latter holds for reflexive Banach spaces.

Distributional boundary value results associated with Hardy spaces have been of importance in particle physics; see \cite{Raina} for example. The distributional boundary value result of \cite{Raina} motivated the authors' work in \cite{C-P-V2019} and the current paper.
\section{Notation}
Let $X$ be a Banach space with norm $\|\:\cdot\:\|_{X}$. Integrals for $X$-valued functions are interpreted in the Bochner sense \cite{D-U77,ryan2002} and the $X$-valued Lebesgue spaces $L^{p}(\mathbb{R}^{n}, X)$, $p\in (0, \infty]$, are defined in the usual way. The space of $X$-valued distributions \cite{T67} is the space of continuous linear mappings $\mathcal{D}(\mathbb{R}^{n})\to X$, denoted as $\mathcal{D}'(\mathbb{R}^{n}, X)$. In analogy to the scalar valued case, we denote the evaluation of a vector valued distribution $\mathbf{f}\in \mathcal{D}'(\mathbb{R}^{n},X)$ at a test function $\varphi\in \mathcal{D}(\mathbb{R}^{n})$ as $\langle \mathbf{f}, \varphi \rangle\in X$.

An open convex cone (with vertex at the origin) $C\subset \mathbb{R}^{n} $ is called \emph{regular} if its closure does not contain any entire straight line. Equivalently, regularity means that the conjugate cone $C^{\ast}=\{y\in\mathbb{R}^{n}:\: y\cdot x\geq 0, \:\forall x\in C\}$ has non-empty interior. The tube domain with base $C$ is $T^{C}=\mathbb{R}^{n}+iC$. The Cauchy-Szeg\"{o} kernel of $T^{C}$ is defined as
$
K(z) =\int_{C^{*}} e^{2 \pi i z \cdot u} \mathrm{d} u $ for  $z \in T^{C},$
while its corresponding Poisson kernel is
\[
Q(z;u) = \frac{|K(z-u)|^{2}}{K(2iy)}, \qquad u \in \mathbb{R}^{n}, \; z=x+iy \in T^{C}.
\]
It should be noted that $Q(z; \cdot)\in L^{p}(\mathbb{R}^{n})$ for any $p\in[1,\infty]$; see \cite[3.7, p.~105]{S-W71}.

If $C$ is an open cone and $0<p\leq \infty$, the $X$-valued Hardy space consists those vector valued holomorphic functions $\mathbf{F}:T^{C}\to X$ such that
\[
\sup_{y\in C}\int_{\mathbb{R}^{n}} \|\textbf{F}(x+iy))\|_{X}^{p} \mathrm{d} x <\infty,
\]
where the usual modification is made for the case $p = \infty$.

\section{Distributional boundary values in $L^{p}(\mathbb{R}^{n},X)$}
\label{bvHardy section 3}
In this section we improve \cite[Theorem 4.4, p.~1650]{C-P-V2019}. It is worth pointing out that our method here is much simpler and shorter than the one employed in \cite{C-P-V2019}.
\begin{theorem} \label{bvHardyTh1}Let $X$ be a Banach spaces, let $C$ be a regular open convex cone, and let $p\in[1,\infty].$
Suppose that the vector valued holomorphic function $\mathbf{F}: T^{C}\to X$ has distribution boundary value $\mathbf{f} \in L^{p}(\mathbb{R}^{n}, X)$, that is,
\begin{equation}
\label{bvHardyeq1}
 \underset{y\in C}{\lim_ {y\to 0}} \int_{\mathbb{R}^{n}} \mathbf{F}(x+iy) \varphi(x)\mathrm{d} x= \int_{\mathbb{R}^{n}}\mathbf{f}(x) \varphi(x)\mathrm{d} x  \qquad \mbox{in } X
\end{equation} 
holds for each test function $\varphi \in \mathcal{D}(\mathbb{R}^{n})$. Then,  $\mathbf{F} \in H^{p}(T^{C},X)$ and
\begin{equation}\label{bvHardyeq2}
\mathbf{F}(z) = \int_{\mathbb{R}^{n}} \mathbf{f}(u) Q(z;u) \mathrm{d} u , \qquad z \in T^{C}.
\end{equation}
\end{theorem}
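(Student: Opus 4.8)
The plan is to reduce everything to the scalar-valued theorem of Carmichael and Richters \cite{C-R83} by testing against functionals $x^{\ast}\in X^{\ast}$, and then to recover the vector valued conclusions via the Hahn--Banach theorem. Fix $x^{\ast}\in X^{\ast}$. Since $x^{\ast}$ is continuous and linear, the scalar function $z\mapsto \langle x^{\ast}, \mathbf{F}(z)\rangle$ is holomorphic on $T^{C}$, and applying $x^{\ast}$ to both sides of \eqref{bvHardyeq1} (using that $x^{\ast}$ commutes with Bochner integrals and with limits) shows that it has the distributional boundary value $\langle x^{\ast}, \mathbf{f}(\cdot)\rangle$. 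Because $\mathbf{f}\in L^{p}(\mathbb{R}^{n},X)$, this boundary value lies in the scalar space $L^{p}(\mathbb{R}^{n})$, with $\|\langle x^{\ast},\mathbf{f}\rangle\|_{L^{p}}\leq \|x^{\ast}\|_{X^{\ast}}\|\mathbf{f}\|_{L^{p}(\mathbb{R}^{n},X)}$. The scalar result therefore applies and yields the Poisson representation
\[
\langle x^{\ast}, \mathbf{F}(z)\rangle = \int_{\mathbb{R}^{n}} \langle x^{\ast},\mathbf{f}(u)\rangle\, Q(z;u)\, \mathrm{d} u, \qquad z\in T^{C}.
\]

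Next I would verify that the $X$-valued integral in \eqref{bvHardyeq2} is well defined as a Bochner integral. Letting $q$ be the conjugate exponent of $p$, the fact that $Q(z;\cdot)\in L^{q}(\mathbb{R}^{n})$ combined with $\mathbf{f}\in L^{p}(\mathbb{R}^{n},X)$ gives, by H\"older's inequality, that $u\mapsto \mathbf{f}(u)Q(z;u)$ is Bochner integrable for each fixed $z\in T^{C}$; denote its integral by $\mathbf{G}(z)\in X$. Since the Bochner integral commutes with the continuous functional $x^{\ast}$, the displayed identity reads $\langle x^{\ast},\mathbf{F}(z)\rangle=\langle x^{\ast},\mathbf{G}(z)\rangle$ for every $x^{\ast}\in X^{\ast}$. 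As $X^{\ast}$ separates the points of $X$, we conclude $\mathbf{F}(z)=\mathbf{G}(z)$, which is precisely \eqref{bvHardyeq2}.

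Finally, to establish membership in $H^{p}(T^{C},X)$ I would estimate directly from \eqref{bvHardyeq2}. Writing $z=x+iy$ and using that $Q(x+iy;\cdot)\geq 0$ is a probability density on $\mathbb{R}^{n}$, i.e. $\int_{\mathbb{R}^{n}}Q(x+iy;u)\,\mathrm{d} u=1$ (see \cite{S-W71}), the vector valued triangle inequality gives $\|\mathbf{F}(x+iy)\|_{X}\leq \int_{\mathbb{R}^{n}}\|\mathbf{f}(u)\|_{X}\,Q(x+iy;u)\,\mathrm{d} u$. For $1\leq p<\infty$, Jensen's inequality applied to the probability measure $Q(x+iy;u)\,\mathrm{d} u$ yields $\|\mathbf{F}(x+iy)\|_{X}^{p}\leq \int_{\mathbb{R}^{n}}\|\mathbf{f}(u)\|_{X}^{p}\,Q(x+iy;u)\,\mathrm{d} u$; integrating in $x$, interchanging the order of integration by Tonelli, and using the normalization $\int_{\mathbb{R}^{n}}Q(x+iy;u)\,\mathrm{d} x=1$ (obtained from the previous one by the translation $u\mapsto x-u$) gives $\int_{\mathbb{R}^{n}}\|\mathbf{F}(x+iy)\|_{X}^{p}\,\mathrm{d} x\leq \|\mathbf{f}\|_{L^{p}(\mathbb{R}^{n},X)}^{p}$, a bound uniform in $y\in C$. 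The case $p=\infty$ is the immediate estimate $\|\mathbf{F}(x+iy)\|_{X}\leq \|\mathbf{f}\|_{L^{\infty}(\mathbb{R}^{n},X)}$.

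The only genuinely delicate point is the reduction step: one must be sure that the scalar Carmichael--Richters theorem really delivers the \emph{Poisson representation} (and not merely $H^{p}$-membership) for each $\langle x^{\ast},\mathbf{F}\rangle$, since it is this pointwise-in-$z$ identity that survives the Hahn--Banach argument and simultaneously furnishes both conclusions. Everything else --- Bochner integrability, the passage of $x^{\ast}$ through the integral, and the Jensen/Tonelli estimate --- is routine once the nonnegativity and normalization properties of $Q$ recorded in \cite{S-W71} are in hand.
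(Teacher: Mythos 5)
Your proposal is correct, and it takes a route that is genuinely, if partially, different from the paper's, even though both share the same skeleton: introduce the Poisson integral $\mathbf{G}(z)=\int_{\mathbb{R}^{n}}\mathbf{f}(u)Q(z;u)\,\mathrm{d} u$ and identify $\mathbf{F}=\mathbf{G}$ by testing against functionals and invoking Hahn--Banach. The difference is in which scalar theorem carries the weight and in how the vector valued properties of $\mathbf{G}$ are obtained. The paper imports both vector valued facts at once --- that $\mathbf{G}\in H^{p}(T^{C},X)$ \emph{and} that $\mathbf{G}$ has distributional boundary value $\mathbf{f}$ --- from \cite[Lemma 3.4]{C-P-V2019}; as a result, each $\langle\mathbf{w}^{\ast},\mathbf{F}-\mathbf{G}\rangle$ has \emph{zero} distributional boundary value, and only the soft scalar uniqueness statement (Rudin's corollary of the edge-of-the-wedge theorem \cite{Rudin71}) is needed to conclude that it vanishes identically. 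You instead apply the full scalar Carmichael--Richters theorem \cite{C-R83} to each $\langle x^{\ast},\mathbf{F}\rangle$, which simultaneously takes care of uniqueness and of the boundary behaviour of the scalar Poisson integrals, and you then recover the $H^{p}$ bound by hand via the Jensen/Tonelli estimate --- in effect re-proving the $H^{p}$-membership half of \cite[Lemma 3.4]{C-P-V2019}, while never needing its boundary-value half. The delicate point you flag resolves favourably: the theorems of \cite{C-R83} do conclude the Poisson integral representation for all $1\leq p\leq\infty$ (this is precisely what \cite[Theorem 4.4]{C-P-V2019}, and the present theorem, generalize), so your reduction is legitimate; your normalization claims $Q\geq 0$, $\int_{\mathbb{R}^{n}}Q(z;u)\,\mathrm{d} u=1$, and $\int_{\mathbb{R}^{n}}Q(x+iy;u)\,\mathrm{d} x=1$ are also correct, the last one following from translation invariance of $Q$ in $x-u$. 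What your route buys is independence from the companion paper's vector valued lemma: the estimates for $\mathbf{G}$ are elementary and self-contained. What it costs is reliance on the much heavier scalar representation theorem at the identification step, where the paper needs only a uniqueness-of-boundary-values result. Both arguments are complete proofs of the statement.
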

\begin{proof}
Define 
$$
\mathbf{G}(z)= \int_{\mathbb{R}^{n}} \mathbf{f}(u) Q(z;u) \mathrm{d} u , \qquad z \in T^{C}.
$$
We have shown in \cite[Lemma 3.4, p.~1639]{C-P-V2019} that $\mathbf{G}\in H^{p}(T^{C}, X)$. Moreover, the quoted lemma also gives that $\mathbf{G}$ has distributional boundary value $\mathbf{f}$. It thus suffices to prove that $\mathbf{F}(z)=\mathbf{G}(z)$, $z\in T^{C}$, which we verify via the Hanh-Banach theorem and the (scalar valued) edge-of-the-wedge-theorem. Let $\mathbf{w}^{\ast}\in X'$. Consider the scalar valued holomorphic function $H_{\mathbf{w}^{\ast}}(z)=\langle\mathbf{w}^{\ast}, \mathbf{F}(z)-\mathbf{G}(z)\rangle$. It satisfies 
\[
\underset{y\in C}{\lim_ {y\to 0}} \int_{\mathbb{R}^{n}} H_{\mathbf{w}^{\ast}}(x+iy) \varphi(x)\mathrm{d} x= \left\langle \mathbf{w}^{\ast} ,  \underset{y\in C}{\lim_ {y\to 0}} \int_{\mathbb{R}^{n}} (\mathbf{F}(x+iy)-\mathbf{G}(x+iy)) \varphi(x)\mathrm{d} x \right\rangle = 0,
\]
for each $\varphi\in \mathcal{D}(\mathbb{R}^{n})$. Using \cite[Corollary of Theorem B, p.~20]{Rudin71}, we obtain $\langle\mathbf{w}^{\ast}, \mathbf{F}(z)-\mathbf{G}(z)\rangle=0$ for all $z\in T^{C}$. Since $\mathbf{w}^{\ast}\in X'$ was arbitrary, the Hanh-Banach theorem yields the equality $\mathbf{F}(z)=\mathbf{G}(z)$, $z\in T^{C}$. This establishes the theorem.
\end{proof}

\section{Poisson integral representation}

\label{bvHardy section 4}

We now turn our attention to Banach spaces $X$ where the Poisson integral representation \eqref{bvHardyeq2}, for some $\mathbf{f}\in L^{p}(\mathbb{R}^{n},X)$, is valid for any $X$-valued holomorphic function $\mathbf{F}$ belonging to the Hardy space $H^{p}(T^{C}, X)$.

We need to introduce some terminology. A Banach space is said to have the \emph{Radon-Nikod\'{y}m property} if the Radon-Nikod\'{y}m theorem holds for vector measures on it; see  \cite[p.~61]{D-U77} or \cite[Chapter 5, p.~102]{ryan2002} for precise definitions and background on material Banach spaces with this property. We call $X$ a \emph{dual Banach space} if it is the strong dual of some Banach space.

\begin{theorem}\label{bvHardyTh2} Let $X$ be a dual Banach space having the 
Radon-Nikod\'{y}m property, let $C$ be a regular open convex  cone, and let $p\in[1,\infty]$. If $\mathbf{F}\in H^{p}(T^{C},X)$, then there is $\mathbf{f}\in L^{p}(\mathbb{R}^{n}, X)$ such that $\mathbf{F}$ has the Poisson integral representation \eqref{bvHardyeq2}.
\end{theorem}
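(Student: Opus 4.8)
The plan is to reduce the vector valued assertion to the classical scalar theory of $H^{p}$ on $T^{C}$ together with the duality theory of Bochner--Lebesgue spaces. Since $X$ is a dual Banach space, write $X=Y'$ for a fixed predual $Y$, and recall that $Y$ (canonically embedded in $X'$) separates the points of $X$. For each $v\in Y$ the scalar valued function $G_{v}(z)=\langle v,\mathbf{F}(z)\rangle$ is holomorphic on $T^{C}$ and satisfies $\sup_{y\in C}\|G_{v}(\,\cdot\,+iy)\|_{L^{p}(\mathbb{R}^{n})}\leq \|v\|_{Y}\,\|\mathbf{F}\|_{H^{p}(T^{C},X)}$, so $G_{v}\in H^{p}(T^{C})$. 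By the classical scalar theory of Hardy spaces over a regular cone (see \cite{S-W71}), $G_{v}$ has a boundary value $g_{v}\in L^{p}(\mathbb{R}^{n})$ with $\|g_{v}\|_{L^{p}}\leq \|v\|_{Y}\|\mathbf{F}\|_{H^{p}}$ and is the Poisson integral of $g_{v}$. The whole problem then becomes to manufacture a single $\mathbf{f}\in L^{p}(\mathbb{R}^{n},X)$ with $\langle v,\mathbf{f}(x)\rangle=g_{v}(x)$ a.e.\ for every $v\in Y$. Indeed, once this is available, applying $v$ to the candidate right-hand side of \eqref{bvHardyeq2} and using the scalar Poisson representation gives $\langle v,\mathbf{F}(z)\rangle=\langle v,\int_{\mathbb{R}^{n}}\mathbf{f}(u)Q(z;u)\,\mathrm{d}u\rangle$ for all $v\in Y$, and \eqref{bvHardyeq2} follows because $Y$ separates $X$ (the interchange of $v$ with the Bochner integral is legitimate since $\mathbf{f}\,Q(z;\cdot)\in L^{1}(\mathbb{R}^{n},X)$).

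For $1<p\leq\infty$ I would construct $\mathbf{f}$ by a weak-$\ast$ compactness argument, and this is precisely where the Radon--Nikod\'{y}m property enters: for $X=Y'$ with the Radon--Nikod\'{y}m property and $p'<\infty$ one has the isometric identification $L^{p}(\mathbb{R}^{n},X)=\big(L^{p'}(\mathbb{R}^{n},Y)\big)'$. By the definition of $H^{p}(T^{C},X)$ the family $\{\mathbf{F}(\,\cdot\,+iy):y\in C\}$ is norm bounded in $L^{p}(\mathbb{R}^{n},X)$, hence weak-$\ast$ relatively compact. Taking $\mathbf{f}$ to be a weak-$\ast$ cluster point as $y\to 0$ and testing against the elementary tensors $\varphi\otimes v\in L^{p'}(\mathbb{R}^{n},Y)$, with $\varphi\in\mathcal{D}(\mathbb{R}^{n})$ and $v\in Y$, one identifies the cluster value with the scalar boundary datum: since $\int_{\mathbb{R}^{n}}\varphi(x)\langle v,\mathbf{F}(x+iy)\rangle\,\mathrm{d}x\to \int_{\mathbb{R}^{n}}g_{v}\varphi$ along the full net $y\to 0$, one obtains $\langle v,\mathbf{f}\rangle=g_{v}$ a.e.\ for each $v\in Y$, as required.

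The case $p=1$ is where the genuine difficulty lies, because $L^{1}(\mathbb{R}^{n},X)$ is not a dual space and the weak-$\ast$ argument breaks down. Here I would instead produce $\mathbf{f}$ through vector measures. Define an $X$-valued set function on the sets $A$ of finite Lebesgue measure by declaring $\langle v,\nu(A)\rangle=\int_{A}g_{v}\,\mathrm{d}x$; since $|\int_{A}g_{v}|\leq\|v\|_{Y}\|\mathbf{F}\|_{H^{1}}$, this gives a well-defined $\nu(A)\in Y'=X$. Because each scalar set function $A\mapsto\langle v,\nu(A)\rangle=\int_{A}g_{v}$ is a genuine measure, $\nu$ is weakly, hence (by Orlicz--Pettis) norm, countably additive and absolutely continuous with respect to Lebesgue measure. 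The decisive step is to show $\nu$ has bounded variation, with $|\nu|(\mathbb{R}^{n})\leq\|\mathbf{F}\|_{H^{1}}$; granting this, the Radon--Nikod\'{y}m property of $X$ furnishes a density $\mathbf{f}\in L^{1}(\mathbb{R}^{n},X)$ with $\nu(A)=\int_{A}\mathbf{f}\,\mathrm{d}x$, so that $\langle v,\mathbf{f}\rangle=g_{v}$ a.e., and the first paragraph closes the argument.

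The main obstacle I anticipate is exactly this bounded-variation estimate. To bound $\sum_{i}\|\nu(A_{i})\|_{X}$ over a finite partition $\{A_{i}\}$, I would choose near-optimal $v_{i}\in Y$ with $\|v_{i}\|_{Y}\leq 1$ and unimodular scalars $\varepsilon_{i}$, assemble the $Y$-valued simple function $\sum_{i}\varepsilon_{i}\mathbf{1}_{A_{i}}v_{i}$, whose sup-norm is at most $1$ by disjointness of the $A_{i}$, and then relate $\sum_{i}\varepsilon_{i}\int_{A_{i}}g_{v_{i}}$ to $\lim_{y\to 0}\int_{\mathbb{R}^{n}}\langle \boldsymbol{\varphi},\mathbf{F}(x+iy)\rangle\,\mathrm{d}x$, bounding the latter by $\sup_{y}\|\mathbf{F}(\,\cdot\,+iy)\|_{L^{1}(\mathbb{R}^{n},X)}=\|\mathbf{F}\|_{H^{1}}$. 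The technically delicate point is replacing the characteristic functions $\mathbf{1}_{A_{i}}$ by test functions $\varphi_{i}\in\mathcal{D}(\mathbb{R}^{n})$, using the $L^{1}$ summability of the $g_{v_{i}}$ and the regularity of Lebesgue measure, while keeping the assembled $Y$-valued test function's sup-norm under control so that the bound $\|\mathbf{F}\|_{H^{1}}$ is not degraded in the limit. Since reflexive spaces are dual spaces enjoying the Radon--Nikod\'{y}m property, they are covered as a special case.
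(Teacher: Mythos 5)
Your strategy---scalarize through the predual $Y$, then glue the scalar boundary functions $g_{v}$ into a single $\mathbf{f}\in L^{p}(\mathbb{R}^{n},X)$ using the dualities furnished by the Radon--Nikod\'{y}m property---is sound in outline, and for $1<p\leq\infty$ your weak* compactness argument is essentially the paper's own (Case I of its proof). But there is a genuine gap at the point where everything starts: you attribute to ``the classical scalar theory (see \cite{S-W71})'' the statement that \emph{every} $G_{v}\in H^{p}(T^{C})$, for every $p\in[1,\infty]$ and every regular open convex cone $C$, has a boundary value $g_{v}\in L^{p}(\mathbb{R}^{n})$ and equals its Poisson integral. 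The results of \cite{S-W71} for general regular cones (the $H^{2}$ Paley--Wiener theory and Theorem 5.6 for $H^{1}$) do not cover $2<p\leq\infty$; for those $p$ the assertion you are quoting is precisely the scalar case $X=\mathbb{C}$ of the theorem under proof. The paper supplies what is missing by an ingredient absent from your proposal: the Stokes-theorem argument (after H\"{o}rmander \cite{HormanderVolI}) showing that any $\mathbf{F}\in H^{p}(T^{C},X)$ has a distributional boundary value, uniformly in $p$ and directly in the vector-valued setting, after which Theorem \ref{bvHardyTh1} converts membership of that boundary value in $L^{p}$ into the representation \eqref{bvHardyeq2}. Without this step (or a citation genuinely covering $p>2$), your cases $p\in(2,\infty]$ are circular.

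For $p=1$ your route really does differ from the paper's, and it has two defects. First, Orlicz--Pettis is misapplied: you know that $A\mapsto\langle v,\nu(A)\rangle$ is countably additive only for $v\in Y$, i.e., $\nu$ is weak* countably additive, while Orlicz--Pettis needs countable additivity against all of $X'$; weak* countable additivity alone does not imply norm countable additivity (take $\nu(A)=\mathbf{1}_{A}\in\ell^{\infty}=(\ell^{1})'$ for $A\subset\mathbb{N}$). The step can be repaired, but only \emph{after} finite variation is known: finite variation plus weak* countable additivity does give norm countable additivity, since the partial sums $\sum_{j\leq N}\nu(E_{j})$ then converge in norm and have weak* limit $\nu(\bigcup_{j}E_{j})$. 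Second, the finite-variation estimate itself---which you rightly call the decisive step---is left as a sketch. It can in fact be closed more easily than you fear: the scalar $H^{1}$ theorem gives $\langle v_{i},\mathbf{F}(\cdot+iy)\rangle\to g_{v_{i}}$ in $L^{1}$-norm, so you may pair $\mathbf{F}(\cdot+iy)$ directly with the simple function $\sum_{i}\varepsilon_{i}\mathbf{1}_{A_{i}}v_{i}$ and let $y\to 0$, obtaining $\sum_{i}\|\nu(A_{i})\|_{X}\leq\sup_{y\in C}\int_{\mathbb{R}^{n}}\|\mathbf{F}(x+iy)\|_{X}\,\mathrm{d}x$; no approximation by test functions is needed. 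Note finally that the paper sidesteps both difficulties at once: it realizes $\mathcal{M}_{1}(\mathbb{R}^{n},X)$ as the dual of $C_{0}(\mathbb{R}^{n},Y)$ (using the Radon--Nikod\'{y}m property of $X$ and the approximation property of $C_{0}(\mathbb{R}^{n})$, via \cite{ryan2002}), so Banach--Alaoglu hands it a countably additive measure of finite variation for free, and only absolute continuity remains, which it obtains by scalarization and Theorem 5.6 of \cite{S-W71}.
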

\begin{proof} We first show that $\mathbf{F}$ has distributional boundary value. Let $\varphi\in\mathcal{D}(\mathbb{R}^{n})$ and write $\Phi(x,y)=\varphi(x)+i\sum_{j=1}^{n}  y_j \partial_{j} \varphi(x)$ for $x\in \mathbb{R}^{n}$ and $y=(y_1,\dots, y_n)\in T^{C}$. Pick a unit vector $\omega \in C$.  Applying the Stokes theorem exactly in the same way as in \cite[p. 67]{HormanderVolI}, we have, if $y\in C$,
\begin{align*}
\int_{\mathbb{R}^{n}}\mathbf{F}(x +iy)\varphi(x)\mathrm{d} x &= \int_{\mathbb{R}^{n}}\mathbf{F}(x +iy+i\omega)\Phi(x,\omega)\mathrm{d} x
\\
&\qquad +i\sum_{j=1}^{n}\omega_j \int_{0}^{1}\int_{\mathbb{R}^{n}}\mathbf{F}(x+i t\omega+iy)\partial_{j}\varphi (x)\mathrm{d} x\mathrm{d} t
\end{align*}
Since $\mathbf{F}\in H^{p}(T^{C}, X)$, we may take the limit as $y\to 0$ in the right-hand side of the above expression and conclude that $\mathbf{F}$ has distributional boundary value $\mathbf{f}\in \mathcal{D}'(\mathbb{R}^{n}, X)$, given in fact as
\[
\langle \mathbf{f},\varphi\rangle= \int_{\mathbb{R}^{n}}\mathbf{F}(x+i\omega)\Phi(x,\omega)\mathrm{d} x +i\sum_{j=1}^{n}\omega_j \int_{0}^{1}\int_{\mathbb{R}^{n}}\mathbf{F}(x+i t\omega)\partial_{j}\varphi (x)\mathrm{d} x\mathrm{d}t.
\]
In view of Theorem \ref{bvHardyTh1}, the representation \eqref{bvHardyeq2} would follow at once if we are able to show that $\mathbf{f}\in L^{p}(\mathbb{R}^{n},X)$. We now focus in showing the latter.
The rest of the proof exploits the fact that we can consider a weak* topology on $L^{p}(\mathbb{R}^{n},X)$ due to our assumptions on $X$. Let $Y$ be a Banach space such that $X=Y'$. For each $y\in C$, write $\mathbf{F}_{y}(x)=\mathbf{F}(x+iy)$. We split our considerations in two cases.

\emph{Case I: $1<p\leq \infty$.} In this case it is well-known (see\footnote{This result is stated in \cite[p.~98]{D-U77} for vector valued $L^{p}$-spaces with respect to finite (scalar valued positive) measures, but the proof given there shows that it holds for $\sigma$-finite measures, in particular for the Lebesgue measure as we used it here.} \cite[Theorem~1, Sect.~IV.1, p.~98]{D-U77}) that $L^{p}(\mathbb{R}^{n}, X)$ is the strong dual of $L^{q}(\mathbb{R}^{n}, Y)$ where $1/p+1/q=1$. Besides its strong topology, we also provide $L^{p}(\mathbb{R}^{n}, X)$  with the weak* topology with respect to this duality. Since the membership $\mathbf{F}\in H^{p}(T^{C},X)$ precisely means that the set $\{\mathbf{F}_{y}:\: y\in C\}$ is strongly bounded in  $L^{p}(\mathbb{R}^{n}, X)$, the Banach-Alaoglu theorem \cite{T67} yields the existence of a sequence of points $y_k\in C$ and an $X$-valued function $\mathbf{g}\in L^{p}(\mathbb{R}^{n}, X)$ such that 
$
\mathbf{F}_{y_k}\to \mathbf{g}$ as $k\to\infty$, weakly* in  $L^{p}(\mathbb{R}^{n}, X)$. But, this weak* convergence is stronger than convergence in $\mathcal{D}'(\mathbb{R}^{n},X)$, whence $\mathbf{f}=\mathbf{g}\in L^{p}(\mathbb{R}^{n}, X)$, as required. 

\emph{Case II: $p=1$}. Denote as $\mathcal{M}_{1}(\mathbb{R}^{n}, X)$ the Banach space of $X$-valued vector measures with finite variation \cite[Chapter 5]{ryan2002} (cf. \cite{D-U77}) on the $\sigma$-algebra of Borel sets of $\mathbb{R}^{n}$. We regard $L^{1}(\mathbb{R}^{n}, X)$ as a closed subspace of $\mathcal{M}_{1}(\mathbb{R}^{n}, X)$. Let $\mathcal{M}(\mathbb{R}^{n})$ be the space of (signed) Borel measures on $\mathbb{R}^{n}$. Denote also by $C_{0}(\mathbb{R}^{n})$ and $C_{0}(\mathbb{R}^{n}, X)$  the spaces of continuous and  $X$-valued continuous functions, respectively, vanishing at $\infty$. Due to the Radon-Nikod\'{y}m property of $X$ and the fact that $C_{0}(\mathbb{R}^{n})$ has the approximation property (which follows from the fact that it has a Schauder basis \cite[Corollary~4.1.4, p.~112]{S82}), we have the following natural isomorphisms, 
\[
\mathcal{M}_{1}(\mathbb{R}^{n}, X)\cong \mathcal{M}(\mathbb{R}^{n})\hat{\otimes}_{\pi} X \cong ( C_{0}(\mathbb{R}^{n}) \hat{\otimes}_{\varepsilon} Y )',
\]
where we have used \cite[Theorem 5.22, p.~108]{ryan2002} in the first isomorphism and \cite[Theorem 5.33, p.~114]{ryan2002} in the second one, and we recall that the symbols $\hat{\otimes}_{\pi}$ and $\hat{\otimes}_{\varepsilon}$  stand for the projective and injective completed tensor products. Also, reasoning as in \cite[Example~3.3, p.~47]{ryan2002}, one readily verifies that  $C_{0}(\mathbb{R}^{n}) \hat{\otimes}_{\varepsilon} Y= C_{0}(\mathbb{R}^{n}, Y)$. Summarizing, we may view $\mathcal{M}_{1}(\mathbb{R}^{n}, X)$ as the dual of $C_{0}(\mathbb{R}^{n},Y)$. Similarly as in \emph{Case I}, we obtain with the aid of the Banach-Alaoglu theorem that there is an $X$-valued vector measure $\boldsymbol{\mu}$ such that $\mathbf{f}=\mathrm{d} \boldsymbol{\mu}$. Since $X$ has the Radon-Nikod\'{y}m property, we must prove that $\boldsymbol{\mu}$ is absolutely continuous with respect to the Lebesgue measure in order to show that $\mathbf{f}\in L^{1}(\mathbb{R}^{n}, X)$. By the Hanh-Banach theorem it suffices to show that if $\mathbf{w}^{\ast}\in X$, then the scalar valued measure $\mu_{\mathbf{w}^{\ast}}=\langle\mathbf{w}^{\ast}, \boldsymbol{\mu}\rangle$ is absolutely continuous with respect to the Lebesgue measure. But note that $\mathrm{d} \mu_{\mathbf{w}^{\ast}}$ is the distributional boundary value of $\langle \mathbf{w}^{\ast} ,\mathbf{F}\rangle $. The function $\langle \mathbf{w}^{\ast} ,\mathbf{F}\rangle $ clearly belongs to the scalar valued Hardy space $H^{1}(T^{C})$ and the well-known classical result \cite[Theorem~5.6, p.~119]{S-W71} says that it has boundary value in $L^{1}(\mathbb{R}^{n})$, so that indeed $\mu_{\mathbf{w}^{\ast}}=\langle \mathbf{w}^{\ast}, \boldsymbol{\mu}\rangle$ is absolutely continuous with respect to the Lebesgue measure.
\end{proof}

Let us point out that any reflexive Banach space has the Radon-Nikod\'{y}m
 property \cite[Corollary~4, p.~82]{D-U77}, whence we immediately obtain the ensuing corollary.
 
 \begin{corollary}
 \label{bvHardyCor} Let $X$ be a reflexive Banach space, let $C\subset \mathbb{R}^{n}$ be a regular open convex cone, and let $p\in [1,\infty]$. Any $\mathbf{F}\in H^{p}(T^{C},X)$ admits a Poisson integral representation \eqref{bvHardyeq2} for some $\mathbf{f}\in L^{p}(\mathbb{R}^{n},X)$.
 \end{corollary}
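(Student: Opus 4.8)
The plan is to deduce this directly from Theorem \ref{bvHardyTh2}. Since that theorem already delivers the Poisson integral representation \eqref{bvHardyeq2} under two structural hypotheses on the target space, namely that $X$ be a dual Banach space and that $X$ have the Radon-Nikod\'{y}m property, I would simply verify that every reflexive Banach space satisfies both conditions and then invoke the theorem verbatim for an arbitrary $\mathbf{F}\in H^{p}(T^{C},X)$.

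First I would check the dual Banach space condition. By definition, $X$ is reflexive precisely when the canonical isometric embedding $X\to X''$ is surjective, so that $X$ is isometrically isomorphic to $(X')'$, the strong dual of the Banach space $Y:=X'$. Hence $X$ is a dual Banach space in exactly the sense required by Theorem \ref{bvHardyTh2}, with $Y=X'$. Second, I would invoke the cited fact that every reflexive Banach space enjoys the Radon-Nikod\'{y}m property, as recorded in \cite[Corollary~4, p.~82]{D-U77}. With both hypotheses in place, Theorem \ref{bvHardyTh2} applies and yields the desired $\mathbf{f}\in L^{p}(\mathbb{R}^{n},X)$ together with the representation \eqref{bvHardyeq2}.

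I do not anticipate any genuine obstacle here, since the substantive analytic work has already been carried out in the proof of Theorem \ref{bvHardyTh2}: the construction of the distributional boundary value via Stokes' theorem, the weak* compactness argument furnished by Banach-Alaoglu, and the absolute-continuity verification in the delicate case $p=1$. The only point meriting a moment's care is to ensure that the identification $X\cong(X')'$ is understood with the strong dual topology, so that it matches the notion of \emph{dual Banach space} used in the statement of Theorem \ref{bvHardyTh2}; for reflexive spaces this is automatic. Consequently the corollary is truly immediate, and the proof reduces to the two definitional verifications above followed by a single application of the preceding theorem.
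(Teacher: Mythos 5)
Your proposal is correct and matches the paper's own argument: the paper likewise deduces the corollary immediately from Theorem \ref{bvHardyTh2} by citing \cite[Corollary~4, p.~82]{D-U77} for the Radon-Nikod\'{y}m property, with the observation that a reflexive space is the dual of $X'$ left implicit. Your slightly more explicit verification of the dual-space condition via $X\cong(X')'$ is the only (harmless) difference.
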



\begin{thebibliography}{99}

\bibitem{C-P-V2019} Carmichael, R.~D., Pilipovi\'{c}, S, Vindas, J.: Vector valued Hardy spaces related to analytic functions having distributional boundary values. Complex Var. Ellip. Equ. \textbf{64}, 1634--1654 (2019)

\bibitem{C-R83}  Carmichael, R., Richters, S.: Holomorphic functions in tubes which have distributional boundary values and which are $H^{p}$ functions. SIAM J. Math. Anal. \textbf{14}, 596--621 (1983)

\bibitem{D-U77} Diestel, J., Uhl, J.~: Vector measures. American Mathematical Society, Providence, RI (1977)

\bibitem{HormanderVolI} H\"ormander, L: The analysis of linear partial differential operators. I. Distribution
theory and Fourier analysis. Springer-Verlag, Berlin (1990)


\bibitem{Raina} Raina, A.: On the role of Hardy spaces in form factor bounds. Lett. Math. Phys. \textbf{2}, 513--519 (1978)

\bibitem{Rudin71} Rudin, W.: Lectures on the edge-of-the-wedge theorem. Vol. 6. Conference board of the
mathematical sciences regional conference series in mathematics. American Mathematical Society, Providence, RI (1971)

\bibitem{ryan2002} Ryan, R.~A.: Introduction to tensor products of Banach spaces. Springer-Verlag London, Ltd., London (2002)

\bibitem{S82} Semadeni, Z.: Schauder bases in Banach spaces of continuous functions. Lecture Notes in Mathematics, vol. 918. Springer-Verlag, Berlin-New York (1982)

\bibitem{S-W71}  Stein, E., Weiss, G.: Introduction to Fourier analysis on Euclidean spaces. Princeton University Press,  Princeton, NJ (1971)

\bibitem{T67} Tr\`{e}ves, F.: Topological vector spaces, distributions and kernels. Academic Press, New York-London (1967)




\end{thebibliography}
\end{document}